\newtheorem{theorem}{Theorem}[section] 
\newtheorem{lemma}[theorem]{Lemma} 
\newtheorem{proposition}[theorem]{Proposition}
\newtheorem{definition}[theorem]{Definition}
\newenvironment{proofof}[1]{\normalsize {\it Proof of #1}:}{{\hfill $\Box$}}
\newcommand{\Z}{{\mathbb Z}}
\newcommand{\N}{{\mathbb N}}
\newcommand{\Sym}{{\mathcal S}}
\newenvironment{mylist}{\begin{list}{}{
\setlength{\parskip}{0mm}
\setlength{\topsep}{2mm}
\setlength{\parsep}{0mm}
\setlength{\itemsep}{0.5mm}
\setlength{\labelwidth}{7mm}
\setlength{\labelsep}{3mm}
\setlength{\itemindent}{0mm}
\setlength{\leftmargin}{12mm}
\setlength{\listparindent}{6mm}
}}{\end{list}}
\title{Sofic groups: graph products and graphs of groups}
\author{Laura Ciobanu, Derek F. Holt and Sarah Rees}
\date{Warwick, 21st June 2013}
\begin{document}
\maketitle
\begin{abstract}
We prove that graph products of sofic groups are sofic,
as are graphs of groups for which vertex groups are sofic and
edge groups are amenable.
\end{abstract}

\noindent 2010 Mathematics Subject Classification: 20F65, 37B05.

\noindent Key words: sofic, graph products, free and direct
products, groups of graphs.

\bigskip

\section{Introduction}
We prove the following results.
\begin{theorem}
\label{thm:graphprod}
A graph product of sofic groups is sofic.
\end{theorem}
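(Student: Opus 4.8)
\emph{Proof strategy.} The plan is to use the standard finitary description of soficity: a countable group $G$ is sofic exactly when, for every finite $F\subseteq G$ and every $\epsilon>0$, there are $n$ and a map $\phi\colon F\to \Sym_n$ into the symmetric group on $n$ points (with normalized Hamming metric $d$) that is almost multiplicative, $d(\phi(gh),\phi(g)\phi(h))<\epsilon$ for $g,h,gh\in F$, and almost free, $d(\phi(g),1)>1-\epsilon$ for $1\ne g\in F$. Equivalently, after fixing a non-principal ultrafilter $\omega$ and a sequence $n_k\to\infty$, the group $G$ embeds into the metric ultraproduct $\mathcal S=\prod_{k\to\omega}\Sym_{n_k}$ carrying the limiting length $\ell$, with $\ell(\theta(g))=1$ for every $g\ne1$. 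I would work in this ultraproduct picture, since it absorbs the finitary error estimates uniformly: the task becomes to produce a length-preserving embedding of the graph product $G=\Gamma(\{G_v\})$ into some $\mathcal S$, given such embeddings of the vertex groups $G_v$.

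The argument then proceeds by induction on the number of vertices, using the standard decomposition of a graph product along a star. Choosing a vertex $w$, and writing $A=G_{\Gamma\setminus w}$ for the graph product over the complement, $C=G_{\mathrm{lk}(w)}$ for the link subgroup and $G_w$ for the new vertex group, one has $G=A *_C (C\times G_w)$, where $C$ and $A$ are graph products over strictly smaller graphs and hence sofic by induction. The two extreme shapes of $\Gamma$ furnish the building blocks to be combined: when $\Gamma$ is complete the product is direct, handled at once by letting the factors act on a product set so that the fixed-point ratio multiplies and $\ell$ is preserved; when $\Gamma$ is edgeless it is a free product, handled by making the factors act independently. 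Throughout I would invoke the normal-form theory for graph products to certify injectivity and $\ell\equiv1$ on nontrivial elements, since it suffices to control the images of \emph{reduced} words.

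For the inductive step I would combine these two mechanisms. Given an approximation $\alpha$ of $A$ on a set $X$ (restricting to one of $C$) and an approximation $\beta$ of $G_w$ on a set $Y$, let $A$ act on $Z=X\times Y$ through the first coordinate and let $G_w$ act through the second, but twisted so as to commute with $C$ while remaining free relative to $A\setminus C$: over each $C$-(approximate-)orbit $O\subseteq X$ one conjugates the $Y$-action of $G_w$ by a permutation $\tau_O$ that is constant along $O$. Constancy along $C$-orbits forces $[\,G_w,C\,]$ to act trivially, so the required commutation holds, while choosing the $\tau_O$ independently renders the $G_w$-action before and after any transition by an element of $A\setminus C$ statistically independent, which is what is needed to destroy the fixed points of an alternating reduced word.

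The hard part, and where I expect the real difficulty to lie, is the freeness estimate: showing that a reduced word $a_1g_1a_2g_2\cdots$ with $a_i\in A\setminus C$ and $g_i\in G_w\setminus\{1\}$ moves almost every point of $Z$. A first-moment computation over the random conjugators $\tau_O$ needs a typical element of $A\setminus C$ to carry a point into a \emph{different} $C$-orbit, so that fresh independence is injected at each syllable. In the companion graphs-of-groups theorem the edge group is amenable and one controls orbit sizes through F\o{}lner sets; here $C=G_{\mathrm{lk}(w)}$ is an arbitrary sofic group, so that device is unavailable, and the genuine obstacle is to organize the approximations of $A$ so that $C$ has many orbits of controlled size and the independence injected at successive syllables does not degenerate. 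Making the expected-number-of-fixed-points estimate go through under this weaker hypothesis, while simultaneously keeping the errors coming from almost-multiplicativity, from the approximate nature of the $C$-orbits, and from the random twisting all $o(|Z|)$, is the crux on which the entire induction rests.
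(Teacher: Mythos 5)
Your proposal correctly reduces to finite graphs, correctly identifies the star decomposition $G=A*_C(C\times G_w)$, and correctly locates the crux: since $C$ is an arbitrary sofic group (not amenable, as in the graphs-of-groups theorem), one cannot invoke the known amalgamated-product results, and your random-twisting scheme needs every (or almost every) reduced syllable $a_i\in A\setminus C$ to carry points into a \emph{different} $C$-orbit so that fresh independence enters at each step. But you leave exactly this step unproved, and it is not a technicality that routine effort would fill. Two things go wrong. First, a sofic approximation of $A$ restricted to $C$ does not have well-defined ``$C$-orbits'' at all (it is only almost an action), and nothing in the soficity of $A$ alone prevents a quasi-action in which elements of $A\setminus C$ approximately preserve whatever cells you use as surrogate orbits; in that case your conjugators $\tau_O$ inject no independence and reduced words can have many fixed points. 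Second, even granting genuine orbits, a first-moment argument over random $\tau_O$ accumulates errors per syllable that you have no mechanism to control uniformly in the orbit structure. So as written the proposal is a strategy with an acknowledged hole at its load-bearing point, not a proof.

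It is worth comparing with how the paper closes precisely this gap, because the fix is a different mechanism, not a sharper estimate along your lines. The paper strengthens the induction hypothesis: the quasi-action of the smaller graph product $H_k$ (your $A$) on $D_k$ is required to carry equivalence relations $\simeq^k_i$, one per vertex, such that for $g$ in the finite set $F$ and all $J$, $d^{\theta_k(g)}\simeq_J d$ if and only if $g\in G_J$. Taking $J=L_k$ (the link of $k$, your $C=G_{\mathrm{lk}(w)}$), this makes ``every element of $A\setminus C$ moves every point to a different $C$-class'' an \emph{exact} consequence of the inductive hypothesis rather than a statistical property to be engineered. Then, instead of independent random conjugators, the paper uses the Elek--Szabo large-girth trick: it forms $C_k=D_k\times A_k\times V_k$ where $V_k$ is a finite group generated by $\pi_k(D_k)\times A_k$ with no relators of length at most $N$, and lets $G_k$ act by multiplying the third coordinate by $(\pi_k(d),a)^{-1}\circ(\pi_k(d),a^{\psi_k(g)})$. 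A reduced word $x_1y_1\cdots x_my_m$ then deposits in the $V_k$-coordinate a reduced word in the generators of $V_k$ (no cancellation, because successive $\pi_k$-values differ by the equivalence-relation property and successive $A_k$-points differ by freeness of $\psi_k$), which is nontrivial since $V_k$ has no short relators; so no point is fixed, deterministically. The price is bookkeeping (the relations $\sim_J$ must be rebuilt on the product $C_1\times\cdots\times C_n$, and a merger-counting lemma bounds the error blow-up to $f(n)=n(nf(n-1)+1)$), but no probabilistic estimate is ever needed. If you want to complete your argument, the two ingredients to import are exactly these: the equivalence-relation condition as part of the induction, and the large-girth finite group replacing your random $\tau_O$.
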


\begin{theorem}
\label{thm:graphgroups}
The fundamental group of a graph of groups is sofic if each vertex group is sofic and each edge group is amenable.
\end{theorem}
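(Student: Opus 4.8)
The plan is to reduce to the elementary Bass--Serre building blocks and then show that soficity survives each of them over an amenable subgroup. Since soficity is a local property (a group is sofic if and only if all of its finitely generated subgroups are), and the fundamental group of an infinite graph of groups is the directed union of the fundamental groups of its finite connected subgraphs, I may assume the underlying graph is finite. Choosing a spanning tree, the fundamental group is then obtained from the vertex groups by finitely many amalgamated free products $A *_C B$ (one per tree edge) followed by finitely many HNN extensions (one per remaining edge), where in every step the amalgamating subgroup $C$ is one of the original edge groups and so is amenable. Arguing by induction on the number of edges, it therefore suffices to prove the two closure statements: (i) if $A,B$ are sofic and $C$ is a common amenable subgroup then $A *_C B$ is sofic; (ii) if $A$ is sofic, $C\le A$ is amenable and $\alpha,\beta:C\to A$ are embeddings, then the HNN extension $\langle A,t\mid t^{-1}\alpha(c)t=\beta(c)\ (c\in C)\rangle$ is sofic.

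\textbf{Amalgamated products.} For (i) I would use the ultraproduct description: a countable group is sofic exactly when it embeds into a metric ultraproduct $\mathcal{U}=\prod_{\omega}\Sym(n_i)$ of finite symmetric groups under the normalized Hamming length. As $A$ and $B$ are sofic, after passing to a common index sequence and amplifying diagonally I may embed both into a single such $\mathcal{U}$ via maps $\rho_A,\rho_B$, giving two sofic representations $\rho_A|_C,\rho_B|_C$ of $C$ inside $\mathcal{U}$. Two things must be arranged \emph{simultaneously}. First, the two representations of $C$ must be made to coincide; this is where amenability of $C$ is indispensable, since a sofic representation of an amenable group is unique up to conjugacy in $\mathcal{U}$ (any such representation is, to within arbitrarily small Hamming error, a disjoint union of near-copies of the left-regular action on a Følner set, and two such Følner tilings can be matched orbit-by-orbit by a single permutation). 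Second, away from $C$ the two representations must be recombined \emph{independently}, in a tree-like fashion, so that the resulting action is almost free on reduced words. Achieving both at once produces a homomorphism $A *_C B\to\mathcal{U}$ whose injectivity is then read off from the Bass--Serre normal form, and (i) follows.

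\textbf{HNN extensions.} Statement (ii) is parallel. Fix a sofic embedding $\rho:A\to\mathcal{U}$. Inside $\mathcal{U}$ the stable letter must conjugate the copy $\rho(\alpha(C))$ onto the copy $\rho(\beta(C))$; amenability of $C$ guarantees that the induced sofic representations $\rho\circ\alpha$ and $\rho\circ\beta$ of $C$ are conjugate in $\mathcal{U}$, and I would take a conjugating element (adjusted, after amplification, to act with almost no fixed points and with the requisite independence) as the image of $t$. This yields a homomorphism from the HNN extension into $\mathcal{U}$, and Britton's Lemma together with the HNN normal form gives injectivity.

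\textbf{Main obstacle.} The crux, underlying both (i) and (ii), is the simultaneous control of the amenable edge group and of the independent recombination off it. Concretely one must tile each finite approximation of $C$ (coming from $A$ and from $B$) into blocks isomorphic to a fixed regular Følner block, match these blocks bijectively while keeping the error densities $o(1)$ along the ultrafilter, and at the same time splice the $A$-part and $B$-part of each fibre so freely that no nontrivial reduced word is sent close to the identity. Balancing these two requirements --- the rigidity forced by the $C$-matching against the genericity needed for almost-freeness --- while preserving the almost-multiplicativity and freeness estimates uniformly, is the technical heart of the argument; once the approximation is shown to be almost free, injectivity through the Bass--Serre and Britton normal forms is routine.
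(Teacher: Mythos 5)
Your reduction is exactly the paper's: pass to finitely generated subgroups to assume the graph finite, choose a spanning tree, and build the fundamental group from the vertex groups by amalgamated products over amenable edge groups (tree edges) followed by HNN extensions over amenable edge groups (remaining edges). The gap lies in your proofs of the two closure statements, and it is genuine. For (i), the amalgamated product case, you could simply have cited the known theorem of Elek--Szabo and Paunescu, as the paper does. But (ii), soficity of HNN extensions over amenable subgroups, is precisely the new content of this theorem, and your argument for it is not a proof. Granted the (true) theorem that any two sofic representations of a countable amenable group are conjugate in the ultraproduct $\mathcal{U}$, choosing a conjugator $u$ with $u^{-1}\rho(\alpha(c))u=\rho(\beta(c))$ for all $c\in C$ only yields a homomorphism of the HNN extension into $\mathcal{U}$, and injectivity fails badly in general: if $\alpha=\beta$ one may take $u=1$, which sends the stable letter $t$ to the identity and kills every Britton-reduced word involving $t$. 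What is needed is a conjugator that is simultaneously compatible with $C$ and generic relative to $\rho(A)$, so that Britton-reduced words have images of Hamming length bounded away from zero; you explicitly identify this balancing act as ``the technical heart of the argument'' and then do not carry it out. The same objection applies to your sketch of (i): matching $\rho_A|_C=\rho_B|_C$ gives a homomorphism from $A*_CB$, not an embedding. Deferring the crux is not a proof of it.

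The paper sidesteps this difficulty entirely, and you could have too, by deducing (ii) from (i) rather than redoing ultraproduct constructions. Given the HNN extension $G$ of $H$ over $K$ with stable letter $t$, set $H_i=t^{-i}Ht^i$ and $S=\langle H_i\mid i\in\Z\rangle$. Then $S$ is normal in $G$ with $G/S\cong\Z$, and $S$ is the fundamental group of a graph of groups over the line of integers: every finitely generated subgroup of $S$ lies in a finite iterated amalgamated product of conjugates of $H$ over conjugates of $K$, which is sofic by finitely many applications of (i). Hence $S$ is sofic, since soficity is a local property; and since an extension of a sofic group by an amenable group is sofic (Elek--Szabo), $G$ is sofic. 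This argument, due to Collins and Dykema, replaces the missing independence construction with two citable closure properties, and is the route the paper takes.
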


Theorem~\ref{thm:graphprod} generalises Theorem 1 of \cite{ElekSzabo}, and our
proof is based on ideas used in the proof of that theorem. 
Theorem~\ref{thm:graphgroups} is an  extension of the result
that free products of sofic groups amalgamated over amenable subgroups
are sofic, proved independently in \cite[Theorem 1]{ElekSzabo2} and
\cite[Corollary 2.3]{Paunescu}; most of the argument needed to extend
the result is already found in \cite[Corollary 3.6]{CollinsDykema}.

The term sofic groups is attributed to Weiss \cite{Weiss}, and applied
to a definition due to Gromov \cite{Gromov};
this is a class of groups which, together with the related class of hyperlinear
groups, has inspired much recent study, through its connections to a variety
of different mathematical areas.
A very useful introduction to sofic groups is provided by \cite{Pestov}.
There are many open questions, including the question of whether all 
groups are sofic.

A number of quite distinct, but equivalent, definitions exist for sofic
groups, and are proved equivalent in \cite{Pestov}. The definition in \cite{Weiss} for finitely generated groups involves
finite subsets of the Cayley graph of the group, and is essentially the same
as the definition in~\cite{Gromov} of the Cayley graph being
{\em initially subamenable}.
An alternative and  equivalent definition of \cite{Pestov} defines
a group  to be sofic if it embeds as a subgroup in an ultraproduct
of symmetric groups. 
Another (equivalent) definition, found in \cite{ElekSzabo}, is phrased in
terms of quasi-actions. We shall work with a variation of that
definition, given below as Definition~\ref{defn:sofic}; we phrase it in terms
of (what we call) {\em special} quasi-actions.
That this is equivalent to the definition of \cite{ElekSzabo} (and hence to the others) follows from \cite[Lemma 2.1]{ElekSzabo}.

For a finite set $A$, let $\Sym(A)$ be the group of all permutations of $A$.
For $\epsilon >0$, we say that two elements $f_1,f_2$ of $\Sym(A)$ are
$\epsilon$-similar  if the number
of elements $a\in A$ for which $f_1(a) \neq f_2(a)$ is at most $\epsilon|A|$.
Note that for $\epsilon\geq 1$ this condition is always satisfied.

\begin{definition}
\label{defn:qaction}
Suppose that $G$ is a group, $\epsilon > 0$ a real number and
$F \subseteq G$ a finite subset of $G$.
A special $(F,\epsilon)$-quasi-action of $G$ on a finite set $A$ is a function
$\phi:G \rightarrow \Sym(A)$ with the following properties:
\begin{mylist}
\item[(a)] $\phi(1)=1$;
\item[(b)] $\forall g \in G$,$\quad\phi(g)^{-1}=\phi(g^{-1})$;
\item[(c)] for $g \in F\setminus\{1\}$, $\phi(g)$ has no fixed points;
\item[(d)] for $g_1,g_2 \in F$ the map $\phi(g_1g_2)$ is 
$\epsilon$-similar to $\phi(g_1)\phi(g_2)$.
\end{mylist}
\end{definition}
For $a \in A, g \in G$, we 
write $a^{\phi(g)}$ for the image of $a$ under $\phi(g)$.

\begin{definition}
\label{defn:sofic}
A group $G$ is {\em sofic} if, for each number $\epsilon \in (0,1)$
and any finite subset $F \subseteq G$, $G$ admits a special
$(F,\epsilon)$-quasi-action.
\end{definition}

It is immediate from the definition that a group is sofic precisely if every one
of its finitely generated subgroups is sofic.
We note at this stage also the following elementary result, which will be
useful to us later.
\begin{lemma}\label{lem:product}
Let $\phi_i$ be special $(F,\epsilon)$-quasi-actions of $G$ on $A_i$ for
$1 \le i \le n$, let $A = A_1 \times \cdots \times A_n$, and define
$\phi: G \to \Sym(A)$ by 
$(a_1,\ldots,a_n)^{\phi(g)} = (a_1^{\phi_1(g)},\ldots,a_n^{\phi_n(g)})$.
Then $\phi$ is a special $(F,n\epsilon)$-quasi-action.
\end{lemma}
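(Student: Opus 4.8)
The plan is to verify each of the four defining properties of a special quasi-action for the product map $\phi$, checking that properties (a)--(c) transfer essentially verbatim while property (d) is where the error parameter accumulates. For (a), observe that $(a_1,\ldots,a_n)^{\phi(1)} = (a_1^{\phi_1(1)},\ldots,a_n^{\phi_n(1)}) = (a_1,\ldots,a_n)$ since each $\phi_i(1)=1$, so $\phi(1)=1$. For (b), the componentwise definition means that $\phi(g)$ and $\phi(g^{-1})$ act coordinatewise as $\phi_i(g)$ and $\phi_i(g^{-1})=\phi_i(g)^{-1}$, and since a permutation of a product is the identity iff each coordinate map is, we get $\phi(g)\phi(g^{-1})=1$, i.e. $\phi(g)^{-1}=\phi(g^{-1})$. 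For (c), if $g \in F\setminus\{1\}$ and $(a_1,\ldots,a_n)$ were fixed by $\phi(g)$, then each $a_i$ would be fixed by $\phi_i(g)$, contradicting that $\phi_i(g)$ has no fixed points; hence $\phi(g)$ is fixed-point-free.

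The substantive step is property (d), and this is the one to handle carefully. For $g_1,g_2 \in F$, I want to bound the number of tuples $a=(a_1,\ldots,a_n) \in A$ on which $\phi(g_1g_2)$ and $\phi(g_1)\phi(g_2)$ disagree. The key observation is that both of these permutations act coordinatewise: $\phi(g_1g_2)$ sends $a$ to the tuple with $i$th entry $a_i^{\phi_i(g_1g_2)}$, while $\phi(g_1)\phi(g_2)$ sends $a$ to the tuple with $i$th entry $a_i^{\phi_i(g_1)\phi_i(g_2)}$. Therefore the two images differ exactly when they differ in at least one coordinate, i.e. when $a_i^{\phi_i(g_1g_2)} \neq a_i^{\phi_i(g_1)\phi_i(g_2)}$ for some $i$. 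This lets me apply a union bound over the coordinates.

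Concretely, for each fixed $i$ let $B_i \subseteq A_i$ be the set of $a_i$ with $a_i^{\phi_i(g_1g_2)} \neq a_i^{\phi_i(g_1)\phi_i(g_2)}$; by property (d) for $\phi_i$ we have $|B_i| \le \epsilon|A_i|$. The set of disagreement tuples for $\phi$ is contained in $\bigcup_{i=1}^n \{a \in A : a_i \in B_i\}$, and the $i$th set in this union has cardinality $|B_i| \cdot \prod_{j \neq i}|A_j| \le \epsilon \prod_{j=1}^n |A_j| = \epsilon|A|$. Summing over $i$ gives at most $n\epsilon|A|$ disagreement tuples, which is exactly the bound required for $\phi$ to be $n\epsilon$-similar in the sense of property (d) with parameter $n\epsilon$. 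I do not anticipate any genuine obstacle here: the only point requiring a moment's care is recognising that the disagreement event for the product is the union (not intersection) of the coordinatewise disagreement events, so that the error bounds add rather than multiply, producing the factor $n$ in $n\epsilon$.
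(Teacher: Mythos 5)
Your proof is correct and follows essentially the same route as the paper: both verify (a)--(c) coordinatewise and reduce (d) to the observation that $\phi(g_1g_2)$ and $\phi(g_1)\phi(g_2)$ agree at a tuple exactly when they agree in every coordinate. The only (immaterial) difference is at the final count: you bound the disagreement set directly by a union bound, giving at most $n\epsilon|A|$ bad tuples, whereas the paper bounds the agreement set below by $(1-\epsilon)^n|A|$ and then invokes $(1-\epsilon)^n \ge 1-n\epsilon$.
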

\begin{proof} The
conditions (a), (b) and (c) of the definition are straightforward to check for $\phi$. The
equality
$(a_1,\ldots,a_n)^{\phi(g_1)\phi(g_2)} = (a_1,\ldots,a_n)^{\phi(g_1g_2)}$ 
holds whenever $a_i^{\phi_i(g_1)\phi_i(g_2)}=a_i^{\phi_i(g_1g_2)}$ for each
$a_i$, which is the case for at least
$(1-\epsilon)^n|A|$ elements $(a_1,\ldots,a_n) \in A$. The result now follows,
since $(1-\epsilon)^n \ge 1-n\epsilon$ for all $n \ge 1$.
\end{proof}
 
This article contains two further sections;
Section~\ref{sec:graphprod} contains the proof of Theorem~\ref{thm:graphprod}
and 
Section~\ref{sec:graphgroups} the proof of Theorem~\ref{thm:graphgroups}.

\section{Proof of the graph product theorem}
\label{sec:graphprod}
Let $\Gamma$ be a simple graph and, for each vertex $v$ of $\Gamma$, let
$G_v$ be a group. The graph product of the groups $G_v$ with respect to
$\Gamma$ is defined to be the quotient of their free product by the normal
closure of the relators $[g_v,g_w]$ for all $g_v \in G_v$, $g_w \in G_w$
for which $\{v,w\}$ is an edge of $\Gamma$.

Graph products were introduced by Green in her PhD thesis \cite{Green},
and their basic properties are established there.
For a graph product of vertex groups $G_1,\ldots,G_n$ with respect to a finite
graph $\Gamma$ with vertices $1,\ldots,n$, and for $J \subseteq \{1,\ldots,n\}$, we define
$G_J := \langle G_j \mid j \in J \rangle$.
By \cite[Proposition 3.31]{Green}, $G_J$ is isomorphic to the graph product of
$G_j$ ($j \in J$) on the full subgraph of $\Gamma$ with vertex set $J$.
Note that $G_\emptyset$ is the trivial group.

Green only considered graph products of finitely many vertex groups, but
the definition applies equally well to graphs with infinite vertex sets $I$.
Since any relation in a group is a consequence of finitely many defining
relations, the property that, for any $J \subseteq I$, $G_J$ is isomorphic to
the graph product of $G_j$ ($j \in J$) on the full subgraph of $\Gamma$ with
vertex set $J$, extends to graph products with infinitely many vertex groups.
Hence, since a group is sofic if and only if all of its finitely generated
subgroups are sofic, it suffices to prove Theorem~\ref{thm:graphprod} for
graph products of finitely many groups, so we shall assume from now on that
the graph $\Gamma$ is finite.

Any non-identity element in a graph product can be written as a product
$g_1\cdots g_l$ for some $l>1$, where each $g_i$ is a non-trivial
element of a vertex group $G_{j_i}$.
By \cite[Theorem 3.9]{Green}, we can get from any such expression of minimal
length to any other by swapping the order in the expression of elements
$g_i$, $g_{i+1}$ from commuting vertex groups.
Hence every minimal length expression for an element $g$ has the same
length $l$, which we call the {\em syllable length} of $g$, and involves the
same set $\{g_1,g_2,\ldots g_l\}$ of vertex group elements, with the same
multiplicities, the
{\em syllables} of $g$.
Whenever $g_1 \cdots g_l$ is a minimal length expression for $g$, we call each
product $g_1 \cdots g_i$ a {\em left divisor} of $g$,
and each product $g_{i+1}\cdots g_n$ a {\em right divisor} of $g$, for $0 \le i \le n$.

We also note that, for any finite subset of a graph product
of groups $G_i$, there is a bound $N$ on the syllable lengths of its elements,
and there are finite subsets $F_i$ of the vertex groups $G_i$ that contain all
the syllables of those elements.
Hence  Theorem~\ref{thm:graphprod} follows from the following proposition. 

\begin{proposition}
There is a function $f:\N \to \N$ with the following property.
Let $G_1,\ldots,G_n$ be sofic groups, and $G$ their graph product with respect
to a finite graph $\Gamma$.
Let $\epsilon>0$ be given, and for each $i=1,\ldots n$,
let $F_i$ be a finite subset of $G_i$,  $A_i$ a finite set, and suppose that
$\psi_i:G_i \rightarrow \Sym(A_i)$ is a special $(F_i,\epsilon)$-quasi-action
of $G_i$ on $A_i$.

Then, for any $N \in \N$, $G$ has a special $(F,f(n)\epsilon)$-quasi-action
$\phi$ on a finite set $C$, where $F$ is the set of elements of $G$ of
syllable length at most $N$ for which each syllable is in some $F_i$,
such that the following additional properties hold:
\begin{mylist}
\item[(1)] whenever $x,y$ are in distinct vertex groups,
$\phi(xy)=\phi(x)\phi(y)$;
\item[(2)] $C$ admits
equivalence relations $\sim_1,\ldots,\sim_n$ such that,
for each $c \in C$, $g \in F$ and $J \subseteq \{1,\ldots, n\}$,
\[ c^{\phi(g)} \sim_J c \iff g \in G_J\]
(where $\sim_J$ is the join of those equivalence relations $\sim_j$ for which $j \in J$).
\end{mylist}
\end{proposition}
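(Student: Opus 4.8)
The plan is to argue by induction on the number $n$ of vertex groups. When $n=1$ there is nothing to do: take $C=A_1$, $\phi=\psi_1$, let $\sim_1$ be the universal relation, and set $f(1)=1$; properties (1) and (2) are then immediate. So suppose $n>1$ and that the proposition holds for graph products of fewer groups. Let $L\subseteq\{1,\dots,n-1\}$ be the set of vertices adjacent to $n$, and write $H:=G_{\{1,\dots,n-1\}}$ and $K:=G_{L\cup\{n\}}$. Since $n$ is joined to every vertex of $L$ and to no other vertex, $K\cong G_L\times G_n$, and, because there are no edges between $\{1,\dots,n-1\}\setminus L$ and $\{n\}$, the standard structure theory of graph products (see \cite{Green}) gives the amalgam decomposition $G=H *_{G_L} K$. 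The point of isolating this decomposition is that the edge group $G_L$ already sits inside $H$ and is detected there by the relation $\sim_L$ (the join of the $\sim_j$, $j\in L$) supplied by the inductive hypothesis; this is what lets us amalgamate without any amenability assumption on $G_L$, and so distinguishes the present situation from the amalgam-over-amenable results cited in the introduction.

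By induction $H$ acts on a finite set $B$ via a special quasi-action $\rho$ satisfying (1) and (2), with relations $\sim_1^B,\dots,\sim_{n-1}^B$; in particular the $\sim_L^B$-classes of $B$ furnish a finite model for the action of $H$ on the cosets $H/G_L$, with $G_L$ preserving each class and every other element of $H$ (of bounded syllable length) moving between classes. I would then take $C:=B\times A_n$ and define $\phi$ as follows: for $h\in H$ let $(b,a)^{\phi(h)}:=(b^{\rho(h)},a)$; and, fixing a map $\tau\colon B\to\Sym(A_n)$ that is constant on each $\sim_L^B$-class, let $(b,a)^{\phi(g)}:=(b,\,a^{\tau(b)^{-1}\psi_n(g)\tau(b)})$ for $g\in G_n$, so that $G_n$ acts on the fibre over $b$ by a $\tau(b)$-conjugate of $\psi_n$. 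The value of $\phi$ on a general element is then defined through a syllable decomposition, by composing the permutations attached to the individual syllables, so that property (1) is forced to hold by construction. One checks this is consistent (independent of the shuffles of commuting syllables permitted by \cite[Theorem 3.9]{Green}) precisely because $\tau$ is constant on $\sim_L^B$-classes: this makes $\phi(x)$ and $\phi(g_n)$ commute \emph{exactly} whenever $x\in G_j$ with $j\in L$ and $g_n\in G_n$, reproducing the only commutations present in $G$, while $\phi(g_n)$ fails to commute with the block-moving elements of $H$.

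Axioms (a) and (b) are immediate, and (c) holds because any $g\in F\setminus\{1\}$ has a syllable acting without fixed points in its coordinate (for $G_n$ via the conjugate of $\psi_n$, for the other groups via $\rho$), so that $\phi(g)$ has none. For (d) I would combine the error already present in $\rho$ with the errors incurred when composing the permutations attached to neighbouring syllables, using Lemma~\ref{lem:product} to control the contribution of the coordinates, and track how this error grows across the inductive step to obtain the bound $f(n)\epsilon$ for a suitable $f$. For (2), lift each $\sim_j^B$ with $j\le n-1$ to $C$ by declaring $(b,a)\sim_j(b',a')$ iff $b\sim_j^B b'$ and $a=a'$, and let $\sim_n$ be equality on the $B$-coordinate, so its classes are the fibres $\{b\}\times A_n$. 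The implication $g\in G_J\Rightarrow c^{\phi(g)}\sim_J c$ then follows from the definitions, and the converse, which is the substantive direction, is obtained by putting $g$ in Green normal form and invoking the inductive detection property on $B$ together with the fixed-point-freeness coming from the $J=\emptyset$ case.

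The hard part will be the simultaneous control of conditions (c) and (d) through the amalgamation. It is easy to make $\phi$ approximately multiplicative at the cost of introducing fixed points, and easy to make it fixed-point-free at the cost of a large error; securing both requires choosing the twist $\tau$ carefully — essentially generically, or after passing to several independent copies of the construction — so that all words of syllable length up to $2N$ behave like genuinely reduced words in the amalgam and meet no unwanted coincidences. This is exactly the delicate point already present in the free-product case of \cite{ElekSzabo}, and I expect the bulk of the work, together with the bookkeeping behind $f(n)$, to be concentrated here; by contrast the verification of the detection property (2) via normal forms, though lengthy, should be comparatively routine once the construction is in place.
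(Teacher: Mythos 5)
Your inductive frame (the base case, the decomposition of $G$ as an amalgam $H *_{G_L} (G_L\times G_n)$ with $H=G_{\{1,\dots,n-1\}}$, a product set carrying lifted equivalence relations, extension of $\phi$ via normal forms) matches the spirit of the paper's proof, which for each $k$ splits elements into blocks from $H_k=G_{I\setminus\{k\}}$ and $G_k$. But there is a genuine gap, and it sits exactly where you deferred the work. The paper's set is not $D_k\times A_k$ but $D_k\times A_k\times V_k$, where $V_k$ is a finite group generated by $\pi_k(D_k)\times A_k$ having no relators of length at most $N$, and a syllable $g\in G_k$ acts by $(d,a,v)\mapsto(d,a^{\psi_k(g)},v\circ(\pi_k(d),a)^{-1}\circ(\pi_k(d),a^{\psi_k(g)}))$. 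This third coordinate records the history of the walk as a word in $V_k$, and it does double duty: a nontrivial $g\in F$ produces a nonempty reduced word in $V_k$, hence moves every point (exact fixed-point-freeness, Condition (c)); and the $\sim^k_k$-classes are the twisted diagonals $\alpha_k(d,v)=\{(d,a,v\circ(\pi_k(d),a)):a\in A_k\}$, not whole fibres, which is what makes the detection property ($2'$) hold. Your substitute --- conjugating the fibre action by $\tau(b)$, with $\tau$ constant on $\sim_L$-classes, chosen generically or repeated over independent copies --- cannot do either job, and your own closing paragraph concedes that this, the actual content of the Elek--Szabo argument, is left unresolved.

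Concretely, assume $n$ is not joined to every other vertex (else $G=H\times G_n$ and there is nothing to prove) and pick $x\in G_j\cap F_j$ nontrivial with $j\notin L\cup\{n\}$, and $1\neq s\in F_n$. First, for $g=xsx^{-1}$ (reduced of syllable length $3$, so $g\in F$ once $N\geq 3$, and $g\notin G_n$): in your construction the $B$-coordinate of $c^{\phi(g)}$ is $b^{\rho(x)\rho(x)^{-1}}=b$, so $c^{\phi(g)}$ lies in the fibre $\{b\}\times A_n$, i.e.\ $c^{\phi(g)}\sim_n c$; since your $\sim_n$-classes are exactly the fibres, property (2) fails for $J=\{n\}$, for every choice of $\tau$ and every $A_n$. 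Second, take $G_n=\Z/2\Z=\langle s\rangle$, $A_n=\{1,2\}$, $\psi_n(s)=(1\,2)$: every conjugate of $(1\,2)$ in $\Sym(A_n)$ is $(1\,2)$, so for $g=xsx^{-1}s^{-1}$ (nontrivial in $G$, in $F$ once $N\geq 4$) your $\phi(g)$ is the identity on all of $B\times A_n$, violating (c) for every $\tau$; passing to independent copies cannot repair a permutation that is globally the identity, and even after enlarging $A_n$ via Lemma~\ref{lem:product} a ``generic'' $\tau$ still leaves (in expectation) a constant number of fixed points, whereas (c) demands exactly none --- the no-short-relator group $V_k$ is precisely the device that converts approximate into exact here. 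Finally, note that even with the $V_k$-coordinate in place, the paper's $\phi_k$ only detects $J\subseteq I\setminus\{k\}$, which is why the paper builds a $C_k$ for \emph{every} $k$ and takes $C=C_1\times\cdots\times C_n$; your plan asks a single decomposition at the vertex $n$ to detect all $J$ at once, and the first example above shows it does not.
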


Note that, by definition, $a \sim_J b$ if and only if there is a sequence
$a=c_1,\ldots,c_m=b$ of elements
with $c_i \sim_{j_i} c_{i+1}$ for some $j_i \in J$.
In particular, $x \sim_\emptyset y \iff x=y$.

Note that the conditions (1) and (2)
imposed on the special quasi-action $\phi$ are necessary for the inductive proof
of the proposition, rather than to deduce the theorem. Condition (1) ensures in particular that
$\phi(x)\phi(y)=\phi(y)\phi(x)$ whenever $x,y$ are from commuting vertex groups.

\begin{proof}
The proof is by induction on $n$. Suppose first that $n=1$. Then $G=G_1$
and $F=F_1$ (for any value of $N \in \N$). We put $F := F_1$ and $C := A_1$,
and define the equivalence relation $\sim_1$ by $c \sim_1 d$ for all
$c,d \in C$. Then $\phi$ is a special $(F,\epsilon)$-quasi-action on $C$,
and the additional property (1) holds vacuously. To see that the additional
property (2) also holds, note that there are only two possibilities for $J$:
$J=\{1\}$ and $J = \emptyset$. If $J=\{1\}$ then $G=G_J$, so the left and
right hand sides of the equivalence in (2) are true for all $g \in G$.
If $J = \emptyset$ then, by the definition of a special
$(F,\epsilon)$-quasi-action, both the left and right hand sides of the
equivalence are true if and only if $g = 1$. So the property (2) holds,
and the statement of the proposition is true with $f(n)=1$.

So now we proceed to prove the inductive step.
We shall prove that the result holds with $f(n) = n(nf(n-1)+1)$.

Write $I=\{1,2,\ldots,n\}$, and for each $k \in I$, $I_k=I \setminus \{k\}$.
For each $k \in I$,
let $H_k := G_{I_k}$ be the subgroup of $G$ that is the graph
product of the groups $G_i$ for $i \neq k$ with respect to the appropriate subgraph of
$\Gamma$.  By the induction hypothesis, we may assume that,
for $\epsilon' := f(n-1)\epsilon$, and $F_{H_k} := F \cap H_k$,
$H_k$ has a special
$(F_{H_k},\epsilon')$-quasi-action $\theta_k$ on a set $D_k$ admitting 
equivalence
relations $\simeq^k_i$, for each $i \neq k$, such that
\begin{mylist}
\item[(1)] $\theta_k(xy)=\theta_k(x)\theta_k(y)$ for $x,y$ in distinct vertex
groups of $H_k$; and
\item[(2)]
for $d \in D_k$, $h \in F_{H_k}$, and $J \subseteq I_k$,
$d^{\theta_k(h)} \simeq_J d \iff h \in G_J$.
\end{mylist}

For each $k \in I$, we shall build a set $C_k$
related to $D_k$, admitting equivalence relations $\sim^k_i$ for each $i \in I$,
and then construct a special quasi-action $\phi_k$ of $G$ on $C_k$ that satisfies
Condition (1) and more. We shall then construct $\phi$ and the equivalence
relations $\sim_1,\ldots,\sim_n$ on the set $C := C_1 \times C_2 \times \cdots \times C_n$ in terms of the special quasi-actions $\phi_k$
and the equivalence relations $\sim^k_i$, using Lemma~\ref{lem:product}. 

For $k \in I$,
let $L_k \subseteq I_k$ be the set of vertices joined in $\Gamma$ to $k$.
Let $\simeq_{L_k}$ be the join of the equivalence relations $\simeq^k_i$ for $i \in L_k$, and let $\pi_k$ be
the projection from $D_k$ to its set of equivalence classes under $\simeq_{L_k}$
(for which the image of $d \in D_k$ is its equivalence class).

Now, using ideas from \cite[Theorem 1]{ElekSzabo}
we choose a finite group $V_k$,
with generating set $\pi_k(D_k)\times A_k$, for which all relators
among the generators have length greater than $N$, and we let
$C_k := D_k \times A_k \times V_k$.

We define equivalence relations $\sim^k_i$ on $C_k$, for $i \neq k$, by the
rules
\[ (d,a,v) \sim^k_i (d',a',v') \iff d \simeq^k_i d',\ a=a',\ v=v'.\]
Then we define $\sim^k_k$ on $C_k$ by specifying its equivalence classes;
for $d \in D_k, v \in V_k$; the class $\alpha_k(d,v)$ is the subset
$\{ (d,a,v\circ(\pi_k(d),a)): a \in A_k\}$ of $C_k$.
Multiplication $\circ$ within the third component is the group multiplication of $V_k$.

We define a special quasi-action $\phi_k$ of $G$ on $C_k$ as a composite
of natural extensions to $C_k$ of the special quasi-actions $\theta_k,\psi_k$ of $H_k$ and $G_k$ on $D_k$, $A_k$.

For $h \in H_k$, we define
\[ (d,a,v)^{\phi_k(h)} = (d^{\theta_k(h)},a,v).\]
Then, for $g \in G_k$, we define
\[ (d,a,v)^{\phi_k(g)} = (d,a^{\psi_k(g)},v\circ(\pi_k(d),a)^{-1}\circ(\pi_k(d),a^{\psi_k(g)})).\]
Now it follows, essentially from \cite[lemma 3.20]{Green}, that
each element $g \in G$ has a unique expression as a product
$g=x_1y_1\cdots x_my_m$, with each $x_i \in H_k$, each $y_i \in G_k$, $x_i$
nontrivial for $i>1$, $y_i$ nontrivial for $i<m$, and such that,
for $i>1$, $x_i$ has no non-trivial left divisor in the subgroup $G_{L_k}$; we
call this expression the {\em normal form} for $g$. We note that the $y_i$'s are
syllables, the $x_i$'s products of syllables and the number of terms at most
the syllable length of $g$. We use that expression for $g$ to extend 
to $G$ the definitions of $\phi_k$ on $H_k$ and $G_k$,
that is,
for $g \in G$,
$\phi_k(g):= \phi_k(x_1)\phi_k(y_1) \cdots \phi_k(x_m)\phi_k(y_m)$.

We need now the following lemma, whose proof we defer.

\begin{lemma}
\label{guts}
Let $\epsilon'' :=(nf(n-1)+1)\epsilon$.
Then, for each $k$, $\phi_k$ is a special $(F,\epsilon'')$-quasi-action of
$G$ on $C_k$, such that
\begin{mylist}
\item[(1)] whenever $x,y$ are in distinct vertex groups, $\phi_k(xy)=\phi_k(x)\phi_k(y)$,
\item[($2'$)] for each $c \in C_k$, $g \in F$, we have
$g \in G_J \Rightarrow c^{\phi_k(g)} \sim^k_J c$ for all $J \subseteq I$,
and $c^{\phi_k(g)}\sim^k_J c  \Rightarrow g \in G_J$ for all $J \subseteq I_k$.
\end{mylist}
\end{lemma}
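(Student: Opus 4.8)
The plan is to verify the four quasi-action axioms (a)--(d) of Definition~\ref{defn:qaction} together with properties (1) and ($2'$) by analysing $\phi_k$ coordinate-by-coordinate on $C_k=D_k\times A_k\times V_k$ and piece-by-piece along the decomposition of $G$ as an amalgam of $H_k$ and $G_{L_k}\times G_k$ over $G_{L_k}$ that underlies the normal form. The basic observation is that the two building blocks act cleanly: for $h\in H_k$ the map $\phi_k(h)$ acts as $\theta_k(h)$ on $D_k$ and trivially on $A_k$ and $V_k$, whereas for $g\in G_k$ the map $\phi_k(g)$ acts as $\psi_k(g)$ on $A_k$, adjusts $V_k$ by a single generator pair, and fixes $D_k$. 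Axioms (a) and (b) then reduce to the corresponding statements for $\theta_k$ and $\psi_k$; for (b) on a general $g$ one must also check that the normal form of $g^{-1}$ yields the reversed product of factors up to exact shuffles, which follows from axiom (b) for $\theta_k,\psi_k$ together with the exactness of commutation discussed next.

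Next I would establish (1) and ($2'$). The only nonformal point in (1) is that $\phi_k(x)\phi_k(y)=\phi_k(y)\phi_k(x)$ whenever $x\in G_k$ and $y\in G_{L_k}$; this holds exactly because the $V_k$-update in $\phi_k(x)$ depends on $d$ only through $\pi_k(d)$, and $\pi_k$ is constant on $\theta_k(G_{L_k})$-orbits. The latter is property (2) of $\theta_k$ applied with $J=L_k$: for $y\in G_{L_k}$ one has $d^{\theta_k(y)}\simeq_{L_k} d$, hence $\pi_k(d^{\theta_k(y)})=\pi_k(d)$. The other cases of (1) are immediate. For the forward half of ($2'$), an element of $G_J$ has its $H_k$-factors moving each $c$ within $\sim^k_J$ by property (2) of $\theta_k$, while its $G_k$-factors (present only when $k\in J$) preserve the classes of $\sim^k_k$ by a short computation with the $V_k$-coordinate; hence the whole product does. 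The backward half, for $J\subseteq I_k$, follows once we know that $g\notin H_k$ forces the $V_k$-coordinate to move; otherwise $g\in H_k$ and we invoke the backward half of property (2) of $\theta_k$.

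That $V_k$-fact is the heart of axiom (c), and I would treat the two together. For $g\in F\setminus\{1\}$ with $g\in H_k$, fixed-point-freeness of $\theta_k$ on $D_k$ suffices. For $g\notin H_k$, each nontrivial $G_k$-syllable $y_i$ contributes a length-two word $(\pi_k(d_i),a_{i-1})^{-1}(\pi_k(d_i),a_i)$ to the $V_k$-coordinate, and these concatenate to a reduced word: inside a block the two letters differ because $\psi_k(y_i)$ is fixed-point-free, so $a_{i-1}\ne a_i$; between consecutive blocks the intervening $H_k$-factor has no left divisor in $G_{L_k}$ and so, by property (2) of $\theta_k$, changes the $\simeq_{L_k}$-class, whence the adjoining letters have different $\pi_k$-components. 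As this reduced word is nonempty of bounded length and every relator of $V_k$ is longer than $N$, it is nontrivial, so the $V_k$-coordinate genuinely changes for every starting point; this gives both (c) and the outstanding case of ($2'$).

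The main obstacle is axiom (d). I would compare $\phi_k(g_1)\phi_k(g_2)$ with $\phi_k(g_1g_2)$ by rewriting the concatenation of the two normal forms into the normal form of the product through elementary moves, estimating each move via the subadditivity principle that replacing a single factor of a product by a $\delta$-similar permutation alters the product on at most $\delta|C_k|$ points. Shuffles contribute nothing since commutation is exact by (1); merges that cancel (produce the identity) contribute nothing by (b); so only non-cancelling merges introduce error, each of size $\le\epsilon'$ when it occurs in $H_k$ (via axiom (d) for $\theta_k$) and $\le\epsilon$ when it occurs in $G_k$ (via $\psi_k$). The crux is to bound the number of such surviving merges independently of $N$: using the graph-product cancellation theory of \cite{Green}, the reduction of $g_1g_2$ leaves, after exact cancellation of the common part, at most one non-cancelling merge per vertex group of $H_k$ and one in $G_k$, for a total error of at most $n\epsilon'+\epsilon=\epsilon''$. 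The delicate step I expect to be hardest is ruling out long chains of inexact merges whose products fall into $G_{L_k}$; controlling these is exactly where the interplay of the normal form, the exactness of shuffles, and property (2) of $\theta_k$ must be exploited.
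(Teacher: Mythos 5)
Your handling of axioms (a), (b), (c) and of properties (1) and ($2'$) coincides with the paper's proof in all essentials: exactness of commutation via $\pi_k(d^{\theta_k(y)})=\pi_k(d)$ for $y\in G_{L_k}$ (property (2) of $\theta_k$ with $J=L_k$), the reduced-word argument in $V_k$ (adjacent letters differ within a block because $\psi_k$ is fixed-point-free, and across blocks because $x_{i+1}\notin G_{L_k}$ forces a change of $\simeq_{L_k}$-class), and the reduction of ($2'$) to property (2) of $\theta_k$. The genuine gap is in axiom (d). Your accounting charges an error of $\epsilon'$ (resp.\ $\epsilon$) to \emph{every} non-cancelling merge, and you then assert, citing the cancellation theory of \cite{Green}, that at most about $n$ such merges occur. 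That assertion is false, and the difficulty you flag at the end --- ``ruling out long chains of inexact merges whose products fall into $G_{L_k}$'' --- cannot be resolved in the direction you propose: such chains genuinely occur, and their length is bounded only by $N$, not by $n$. Concretely, in rewriting $g_1g_2$ the blocks $x_m$ and $x'_1$ may combine (by syllable-level cancellation and commutation) into an element $z\in G_{L_k}$; then $z$ shuffles left past $y_{m-1}$, the pair $y_{m-1}y'_1$ may cancel, $z$ then merges with $x'_2$, the resulting block may again combine with $x_{m-1}$ into $G_{L_k}$, and so on. Each round consumes one block of $g_1$ and performs one non-cancelling merge, so up to roughly $N/2$ of them can occur, giving an error bound of order $N\epsilon'$, which is useless because $N$ must be allowed to grow while $\epsilon''$ stays fixed.

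The paper's resolution is not to bound the number of all merges but to introduce a second exemption that your proposal is missing: a merger of $z_1,z_2\in H_k$ is exact not only when $z_1z_2=1$, but whenever $z_1$ and $z_2$ are left and right divisors of $z_1z_2$, i.e.\ when syllable lengths add, since then the induction hypothesis on $H_k$ gives $\phi_k(z_1z_2)=\phi_k(z_1)\phi_k(z_2)$. The paper calls such mergers \emph{non-reducing}, and the merges occurring in the long chains above (for instance $z\,x'_2$ with $z\in G_{L_k}$ and $x'_2$ having no left divisor in $G_{L_k}$) are exactly of this exempt kind. With this distinction in place, the paper proves its inner counting lemma --- at most $n$ \emph{reducing} $H_k$-mergers and at most one reducing $G_k$-merger --- by an induction on the number of blocks of $g_1$, split into three cases according to whether $y_m=1$ and whether $x'_1\in G_{L_k}$; the chains you worried about pass through these cases contributing only exempt, non-reducing merges. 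So your proposal identifies the right place where the proof is hard, but lacks the key idea (exactness of divisor-split mergers) that makes the count finite, and the count it asserts for its own notion of ``non-cancelling merge'' is not merely unproved but wrong.
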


Now we define a map $\phi:G \rightarrow \Sym(C)$, where
$C := C_1 \times \cdots \times C_n$, by
$(c_1,\ldots,c_n)^{\phi(g)} = (c_1^{\phi_1(g)},\ldots,c_n^{\phi_n(g)})$.
It follows from
Lemma~\ref{lem:product} that this is a
$(F,f(n)\epsilon)$-quasi-action with
$f(n) = n(nf(n-1)+1)$.
Condition (1) of the proposition is inherited from the maps $\phi_k$.

We define equivalence relations $\sim_{1},\sim_{2},\ldots,\sim_{n}$
on $C$ by $(c_1,\ldots,c_n) \sim_j (c'_1,\ldots,c'_n)$ if and only if
$c_k \sim^k_j c'_k$ for $1 \le k \le n$.
We need now to verify Condition (2).

Let $J \subseteq I$. The fact that $g \in G_J$ implies that
$c^{\phi(g)} \sim_J c$ for all $c \in C$ is inherited from
the maps $\phi_k$.
If $J = I$, then $G=G_J$ and the converse statement is immediate.
Otherwise we have $J \subseteq I_k$ for some $k$ with $1 \le k \le n$.
If $g \not\in G_J$, and $c = (c_1,\ldots,c_n) \in C$, then
$c_k^{\phi_k(g)} \not\sim^k_J c_k$ and hence $c^{\phi(g)} \not\sim_J c$.

So the proof of the proposition will be complete once the proof of
Lemma~\ref{guts} has been provided. 
\end{proof}

\begin{proofof}{Lemma~\ref{guts}}
Note that
it is  clear that the restriction of $\phi_k$
to $H_k$ is a
special $(F_{H_k},\epsilon')$-quasi-action for $H_k$,
since $\theta_k$ is.
And certainly that quasi-action preserves each of the $\sim^k_i$ equivalence classes with $i\neq k$.  
And it is clear that the restriction of $\phi_k$ to $G_k$
is a special
$(F \cap G_k ,\epsilon)$-quasi-action for $G_k$, since $\psi_k$ is.
That quasi-action preserves the $\sim^k_k$ equivalence classes,
since both $(d,a,v)$ and $(d,a,v)^{\phi_k(g)}$ are in $\alpha_k(d,v\circ(\pi_k(d),a)^{-1})$.

The equation $(d,a,v)^{\phi_k(1_G)} = (d,a,v)$ follows immediately from
$(d,a,v)^{\phi_k(h)} = (d^{\theta_k(h)},a,v),$ for $h \in H_k$, and hence Condition (a) of Definition~\ref{defn:qaction} is verified for $\phi_k$.

We shall verify the remaining conditions in the order
(c), (1), (b), (d), ($2'$).

First we introduce some notation.
We need to consider $\phi_k(g)$ for a general element $g$ in the graph product,
written in normal form as $x_1y_1\cdots x_my_m$.
We write $x$ for the group product $x_1\cdots x_m$,
then $y$ for the group product
$y_1\cdots y_m$, and $x[i],y[i]$ for the products $x_1\cdots x_i$,
$y_1,\cdots y_i$, where $x[0] = y[0] = 1$. 

We see then that 
\begin{eqnarray*}
(d,a,v)^{\phi_k(g)}&=&(d,a,v)^{\phi_k(x_1y_1\cdots x_my_m)}
= (d^{\theta_k(x[m])},a^{\psi_k(y[m])},v \circ u),\\
\quad\hbox{\rm where} \quad u&=&\prod_{i=1}^m (\pi_k(d^{\theta_k(x[i])}),a^{\psi_k(y[i-1])})^{-1}\circ
(\pi_k(d^{\theta_k(x[i])}),a^{\psi_k(y[i])}).
\end{eqnarray*}
unless $y_m$ is the identity, in which case the product for $u$ is from $i=1$
to $m-1$.

Our next step is to establish Condition (c) of Definition~\ref{defn:qaction}
for $\phi_k$.
Let $g$ be a non-trivial element of $F$, with normal form
$x_1y_1 \cdots x_m y_m$.
So $2m \le N$ and, for each $i$, $x_i \in F_{H_k}$ and $y_i \in F_k$.
Suppose first that $u$, in the above expression, is not the
empty word.  Since $\psi_k$ is a special quasi-action,
Condition (c) for $\psi_k$ implies that
$a^{\psi_k(y[i-1])}\neq a^{\psi_k(y[i])}$ for each $i$. 
Since $x_{i+1} \not \in G_{L_k}$, it follows from the induction hypothesis that
$\theta_k(x_{i+1})$ cannot map any element of $D_k$ to an element in the same $\simeq_{L_k}$
equivalence class, that is, $\pi_k(d^{\theta_k(x[i])}) \neq \pi_k(d^{\theta_k(x[i+1])})$. 
So no generator
in the word of length $2m$ representing $u$ can freely cancel with the
generator either before it or after it. 
The fact that $V$ admits no short relators now ensures that $u$ is nontrivial.
In that case certainly $(d,a,v)^{\phi_k(g)} \neq (d,a,v)$.

So now suppose that $u$ is empty.
Then $m=1$, $y_1$ is trivial, and $g=x_1$. So $x=x_1$ is a non-identity
element of $F_{H_k}$, and hence $d^{\theta_k(x)} \neq d$. So again $(d,a,v)^{\phi_k(g)} \neq (d,a,v)$.
Hence we have shown that the map $\phi_k$ from $G$ to $\Sym(C_k)$ allows no
non-identity element of length less than $N$ in $F$ to fix any element of $C_k$,
and so Condition (c) of Definition~\ref{defn:qaction} is verified for $\phi_k$.

In order to establish Condition (1) of the Lemma for $\phi_k$,
we suppose first that $x \in G_{L_k}$, and $y \in G_k$. 
By definition $\phi_k(xy)=\phi_k(x)\phi_k(y)$, and
\[ (d,a,v)^{\phi_k(x)\phi_k(y)} = (d^{\theta_k(x)},a^{\psi_k(y)},v \circ
(\pi_k(d^{\theta_k(x)}),a)^{-1}\circ(\pi_k(d^{\theta_k(x)}),a^{\psi_k(y)}))\]
while
\begin{eqnarray*}
(d,a,v)^{\phi_k(y)\phi_k(x)} &=& (d,a^{\psi_k(y)},v\circ(\pi_k(d),a)^{-1})\circ(\pi_k(d),a^{\psi_k(y)}))^{\phi_k(x)} \\
&=& (d^{\theta_k(x)},a^{\psi_k(y)},v\circ (\pi_k(d),a)^{-1}\circ (\pi_k(d),a^{\psi_k(y)})).\end{eqnarray*}
Then since $d \simeq_{L_k} d^{\theta_k(x)}$, we have $\pi_k(d) = \pi_k(d^{\theta_k(x)})$,
and so \[(d,a,v)^{\phi_k(x)\phi_k(y)} = (d,a,v)^{\phi_k(y)\phi_k(x)},\]
that is, for $x \in G_{L_k}$, $y \in H_k$,
$\phi_k(xy)=\phi_k(x)\phi_k(y) = \phi_k(y)\phi_k(x)$.

Now suppose that $x,y$ are
in distinct vertex groups, $G_i,G_j$.
If
$i,j \neq k$ then Condition (1) follows immediately by induction applied to $H_k$.
If $j=k$, or if $i=k$ and $G_i,G_j$ do not commute, then $xy$ is in normal
form, and Condition (1) follows from the definition of $\phi_k$.
Finally if $i=k$ and $G_i,G_j$ commute, then $x \in G_{L_k}, y \in H_k$,
and we can deduce Condition (1) for $\phi_k$ from the result above.

Next suppose that $g=x_1y_1\cdots x_my_m \in G$.
We compare $\phi_k(g)^{-1}$ and $\phi_k(g^{-1})$.
We have $g^{-1}=y_m^{-1}x_m^{-1}\cdots y_1^{-1}x_1^{-1}$.
The expression for $g^{-1}$ is not necessarily in normal form, because some of
the $x_i^{-1}$ could have left divisors in $G_{L_k}$, but we can transform it
into normal form by splitting any such $x_i^{-1}$ into syllables and
then applying commuting relations to move left divisors of $x_i^{-1}$ in
$G_{L_k}$ past $y_i^{-1}$. By the results of the preceding two paragraphs, if
we apply the corresponding transformations to
$\phi_k(y_m^{-1})\phi_k(x_m^{-1})\cdots \phi_k(y_1^{-1})\phi_k(x_1^{-1}$), then
we do not change the resulting permutation.
Hence we have
$\phi_k(g^{-1}) =
\phi_k(y_m^{-1})\phi_k(x_m^{-1})\cdots \phi_k(y_1^{-1})\phi_k(x_1)^{-1}$.
It follows from Condition (b) of Definition~\ref{defn:qaction} that
$\phi_k(y_i^{-1})$ is inverse to $\phi_k(y_i)$
and from the induction hypothesis on $H_k$ that $\phi_k(x_i^{-1})$ is inverse to
$\phi_k(x_i)$.
Hence $\phi_k(g^{-1}) = \phi_k(g)^{-1}$, which verifies Condition (b) of
Definition~\ref{defn:qaction} for $\phi_k$.

We proceed now to verify Condition (d) of Definition~\ref{defn:qaction} for
$\phi_k$; that is, to show that for all $g_1,g_2 \in F$, $\phi_k(g_1g_2)$ is
$\epsilon''$-similar to $\phi_k(g_1)\phi_k(g_2)$.
Let $g_1=x_1y_1\cdots x_my_m$, $g_2=x'_1y'_1\cdots x'_{p}y'_{p}$
be the normal forms of $g_1,g_2 \in F$.
In the following discussion, we refer to an element of $H_k$ or of $G_k$ as
a {\em block}, and to a product of blocks as an {\em expression}.
The normal form for $g_1g_2$ is derived from the concatenation
$x_1y_1\cdots x_my_mx'_1y'_1\cdots x'_{p}y'_{p}$ by a sequence of moves,
each of which is one of four types:

\begin{description}
\item[(a)] deletion of a block that is equal to the identity;
\item[(b)] cancellation (that is, merger of two adjacent mutually
inverse blocks that are either both in $H_k$ or both in $G_k$);
\item[(c)] expression of a block in $H$ as a product of a left divisor in
$G_{L_k}$ and a right divisor, and moving the left divisor to the left,
past a block in $G_k$;
\item[(d)] merger of two adjacent blocks that are either both in $H_k$ or
both in $G_k$, and whose product is not the identity,
to give a new block from that same subgroup.
\end{description}
Note that in (c) the left and right divisors of a block in $H_k$ are simply
subblocks, whose concatenation is a permutation of the original block; that is,
the (multi)set of syllables of the block in $H_k$ is the union of the
(multi)sets of syllables of those left and right divisors.
By contrast, a move of type (d) will normally
change the (multi)set of syllables in an expression.
Starting with the permutation
$$\phi(x_1)\phi(y_1)\cdots \phi(x_m)\phi(y_m)\phi(x'_1)\phi(y'_1)\cdots
\phi(x'_{p})\phi(y'_{p}),$$
we study the sequence of composites of permutations of $C_k$ defined by
the various expressions that arise when we apply the corresponding
operations to this expression of images during this rewrite process,
and keep track of the proportion of elements of $C_k$ on which they differ.
We note that, as a consequence of what we have proved so far,
two expressions that differ only on moves of types (a), (b) and (c)
correspond to composites of permutations that have the same effect on all
points of $C$. Hence we only need to concern ourselves with moves of type (d).

Suppose that a move converts an expression $w$ to an expression $w'$.
Let $\sigma, \sigma'$ be the permutations corresponding to the two expressions.
If the move merges two blocks from $G_k$, then the permutations $\sigma$
and $\sigma'$ differ on the same proportion of elements of $C_k$ as do permutations
for the quasi-action of $G_k$ on the set $A_k$, that is, on at most
$\epsilon|C_k|$ of the elements, by the hypothesis.

If the move merges two blocks from $H_k$, then the permutations $\sigma$ and
$\sigma'$ differ on the same proportion of elements of $C_k$ as do permutations
for the quasi-action of $H_k$ on the set $D_k$, that is, on at most
$f(n-1)\epsilon|C_k|$ of the elements, by the induction hypothesis.
Notice however that if the two blocks $z_1,z_2$ being merged are left and
right divisors of $z_1z_2$ (or, equivalently, if the syllable length of
$z_1z_2$ is the sum of the syllable lengths of $z_1$ and $z_2$),
then our induction hypothesis on $H$ ensures that
$\phi_k(z_1z_2) = \phi_k(z_1)\phi_k(z_2)$.
We shall call such mergers {\it non-reducing}, and other mergers,
for which this equality is not guaranteed to hold, {\it reducing}.

Condition (d) can now be now established by application of the following lemma.

\begin{lemma}
During the rewrite process, we perform at most $n$ reducing mergers of
blocks of $H_k$ and at most one reducing merger of blocks of $G_k$.
\end{lemma}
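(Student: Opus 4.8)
The plan is to analyse the rewrite from the concatenation $x_1y_1\cdots x_my_m\,x'_1y'_1\cdots x'_py'_p$ to the normal form of $g_1g_2$ as a process localised at the single interface between $g_1$ and $g_2$: since each of $g_1,g_2$ is already in normal form, every applicable move of types (a)--(d) touches blocks straddling, or produced at, that interface. I would first record the bookkeeping convention that matters for the count: a move of type (b) is a \emph{cancellation}, merging two mutually inverse blocks to the identity, whereas a \emph{merger} in the sense of the Lemma is a move of type (d), producing a nontrivial block, and only the latter are counted. Using the fact, already established above, that moves of types (a), (b) and (c) do not change the permutation of $C_k$ represented, the reduction may be organised as a zipper: mutually inverse layers adjacent across the interface are cancelled (type (b)) inwards, together with the type-(c) commutations needed to expose them, until one reaches a \emph{terminal collision} where the two innermost surviving blocks fail to cancel completely and must be merged by a move of type (d).

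For the $G_k$ count I would argue as follows. Every element of $G_k$ has syllable length $1$ if nontrivial and $0$ otherwise, so merging two nontrivial $G_k$-blocks $y,y'$ produces $yy'$ of syllable length at most $1<2$; hence \emph{every} merger of two $G_k$-blocks is reducing (a trivial block is removed by a move of type (a), not merged). It remains to see that at most one such merger can occur. Two $G_k$-blocks can be merged only after becoming adjacent, which requires every intervening $H_k$-block to be eliminated --- that is, to collapse into $G_{L_k}$ and be commuted away by moves of type (c). For $g_1$ the blocks separating consecutive $y_i$ are the $x_i$ with $i>1$, each nontrivial and, by the normal form, having no nontrivial left divisor in $G_{L_k}$; such a block cannot lie in $G_{L_k}$ and so cannot be commuted past a $G_k$-block, and likewise for the $x'_j$ with $j>1$ in $g_2$. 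Thus the only $G_k$-blocks that can ever meet are those flanking the interface, and once they have met (and, if their product is trivial, cancelled), the non-vanishing $H_k$-material on either side blocks any further $G_k$-adjacency. This yields at most one reducing $G_k$-merger.

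The real work is the bound of $n$ on reducing $H_k$-mergers, and this I expect to be the main obstacle. When the terminal collision is between two $H_k$-blocks, it contributes one reducing $H_k$-merger producing a block $w_1$, after which re-normalisation drives a cascade: one splits off the maximal $G_{L_k}$-left-divisor $\ell_1$ of $w_1$, pushes it leftwards past the neighbouring $y$-block (type (c)) and merges it into the adjacent $x$-block (type (d)), and repeats. The key structural input is again the normal-form condition that each interior $x_{m-i}$ has no nontrivial $G_{L_k}$-left-divisor: I would show that the merger $x_{m-i}\ell_i$ can be reducing, and can propagate a further nontrivial $G_{L_k}$-divisor $\ell_{i+1}$ to the next stage, only when $\ell_i$ cancels the entire non-$G_{L_k}$ frontier of $x_{m-i}$ (its minimal syllables, all of which lie outside $G_{L_k}$), since otherwise $x_{m-i}$ is a left divisor of $x_{m-i}\ell_i$, the merger is non-reducing, and the product inherits the absence of a $G_{L_k}$-left-divisor, halting the cascade.

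The plan is then to show that this strong ``frontier-clearing'' condition can be met only a bounded number of times, bounded by $n$. I would look for a monotone quantity that strictly decreases at each reducing cascade-merger and is controlled by the number of vertex groups --- for instance the support of the travelling coset element together with the depth of penetration into $H_k$, the point being that each frontier-clearing step removes the minimal syllables of one block and exposes $G_{L_k}$-material in strictly fewer vertex groups of $L_k\subseteq I_k$. Granting such an invariant, the cascade contributes at most $n$ reducing $H_k$-mergers; combined with the cancellation-only nature of the zipper's interior and the $G_k$-analysis, and with the fact (from the induction hypothesis on $H_k$, and the hypothesis on $G_k$) that each reducing $H_k$-merger alters the permutation on at most $f(n-1)\epsilon|C_k|$ points and each reducing $G_k$-merger on at most $\epsilon|C_k|$ points while all other moves change nothing, this gives the required $\epsilon''=(nf(n-1)+1)\epsilon$-similarity and hence Condition (d). The delicate point, and where I would expect to spend most effort, is making the frontier-clearing invariant precise in the presence of the partial-commutation structure of $H_k$, so that the count is genuinely $n$ rather than the crude bound of $m$ obtained by charging one merger to each block of $g_1$.
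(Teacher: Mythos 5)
Your skeleton matches the paper's proof in outline: you localise the rewrite at the interface, you bound the $G_k$-mergers by one via a blocking argument, and you bound the $H_k$-mergers by tracking a travelling $G_{L_k}$-element whose support should shrink, giving $1+|L_k|\le n$. The paper does exactly this, organised as a three-case analysis (on whether $y_m=1$ and whether $x'_1\in G_{L_k}$) with induction on $m$. But the step you yourself flag as unresolved is a genuine gap, and the specific mechanism you propose to close it --- the ``frontier-clearing'' dichotomy --- is false. Concretely: let $\Gamma$ have vertices $1,2,3,k$ with edges $\{1,k\}$, $\{2,k\}$, $\{2,3\}$ only, so $L_k=\{1,2\}$ and $3\notin L_k$. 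Take nontrivial $a\in G_3$, $b\in G_1$, $c\in G_2$, let $x_{m-i}=ab$ (this has no nontrivial left divisor in $G_{L_k}$, since $\{1,3\}$ is not an edge) and let the travelling element be $\ell_i=b^{-1}c\in G_{L_k}$. Then $x_{m-i}\ell_i=ab\,b^{-1}c=ac$: the merger is reducing (syllable length $2$, not $4$), the frontier $a$ of $x_{m-i}$ is not touched, and yet, because $\{2,3\}$ is an edge, $ac=ca$ has the nontrivial $G_{L_k}$-left-divisor $c$, so the cascade does propagate. This kills both halves of your claimed dichotomy: reducing-and-propagating occurs without frontier-clearing, and the alternative ``$x_{m-i}$ is a left divisor of $x_{m-i}\ell_i$, the merger is non-reducing, the cascade halts'' fails on all counts. (Your hoped-for invariant itself is sound --- here the support drops from $\{1,2\}$ to $\{2\}$ --- but your proposal contains no correct argument for it, and proving it requires a genuinely different mechanism: one must exploit that cancellation in $x\ell$ can only consume $G_{L_k}$-syllables of $x$, so a vertex group can survive in the travelling element only if every obstruction to it in $x$ is cancelled, which forces the group carrying the cancelled syllables out of the support.)

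The paper closes this step differently, and more cheaply: instead of merging the whole travelling element $z_1$ into $x_m$ and then re-extracting a divisor from the merged block, it splits $z_1=z_{11}z_{12}$ \emph{before} merging, where $z_{11}$ is the longest left divisor of $z_1$ that commutes with $x_m$; only $z_{12}$ is absorbed into $x_m$, while $z_{11}\in G_{L''}$ with $|L''|<|L'|$ travels on, so the support drop is built into the bookkeeping and the induction on $m$ gives at most $|L'|\le|L_k|$ mergers in Case 1, hence at most $|L_k|+1\le n$ overall. One further inaccuracy in your $G_k$-count: your claim that an intervening $H_k$-block ``cannot lie in $G_{L_k}$ and so cannot be commuted past a $G_k$-block'' is too strong, since a product such as $x_mx'_1$ can collapse into $G_{L_k}$ and move away (this is exactly the paper's Case 2 recursion into Case 3, where the eventual $G_k$-merger is $y_{m-1}y'_1$, not $y_my'_1$); your conclusion of at most one $G_k$-merger nevertheless survives, because a type-(d) merger, unlike a cancellation, leaves a nontrivial $G_k$-block that no subsequent move can remove.
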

\begin{proof}
We may assume that $m,p>0$ (since otherwise one of $g_1$, $g_2$ is the identity)
and split the proof into three cases
(1) $1 \neq y_m$ and $x'_1 \not\in G_{L_k}$;
(2)  $y_m=1$; and (3) $1 \neq y_m$ and $x_1 \in G_{L_k}$.

We deal with Case 1 first, proving by induction on $m$ that in this case
the product can be 
rewritten using at most $|L_k|$ mergers, all of which are  within $H_k$.
Using that result we then deal with the remaining two cases together,
also using induction on $m$.

{\bf Case 1.} $1 \ne y_m$ and $x'_1 \not\in G_{L_k}$.

Let $x'_1 = z_1z_2$, where $z_1$ is the longest left divisor of $x'_1$ in
$G_{L_k}$.  Suppose that $z_1 \in G_{L'}$ for some $L' \subseteq L_k$.
We prove by induction on $m$ that this product can be rewritten using
at most $|L'|$ ($\leq |L_k|$) $H_k$-mergers and no $G_k$-mergers.

If $m = 1$ then there can be at most one $H_k$-merger
$x_1x'_1$, so the result is clear.
So suppose that $m>1$; then $y_{m-1} \ne 1$, and $x_m$ is
nontrivial with no left divisor in $G_L$. If $z_1$ commutes with $x_m$, then
the claim follows by induction applied to the product
$(x_1y_1 \cdots x_{m-1}y_{m-1})(z_1x_my_mz_2y'_1 \cdots x'_{p}y'_{p})$.
Otherwise, we can write $z_1 = z_{11}z_{12}$, where $z_{11}$ (which may be
trivial) is the longest left divisor of $z_1$ that commutes with $x_m$.  So
$z_{11} \in G_{L''}$ with $|L''| < |L'|$.  We can then perform the rewriting by
performing an $H_k$-merger $x_mz_{12}$ (if necessary) and, by induction, at most
$|L''|$ further $H_k$-mergers resulting from moving $z_{11}$ further to the
left. This completes the proof of the claim, and of the lemma in Case 1.

So now we may assume that $m,p>0$, and that we are in Case 2 or 3.

{\bf Case 2.} $y_m=1$.

If $m=1$, then there is at most one $H_k$-merger $x_1x'_1$, and so the result
holds.
So suppose that $m>1$ and hence that $y_{m-1} \ne 1$, and $x_m$ is
nontrivial with no left divisor in $G_{L_k}$. 

If $x_mx'_1 \not\in G_{L_k}$, then we
perform an $H_k$-merger (if necessary) on $x_mx'_1$, and
now observe that the product
$(x_1y_1 \cdots x_{m-1}y_{m-1})(x_mx'_1 y'_1 \cdots x'_{p}y'_{p})$
satisfies the conditions of Case 1,
and so can be rewritten using at most $|L_k|$ further $H_k$-mergers and no
$G_k$-mergers. So in this case too, the lemma is proved.

If $x_mx'_1 \in G_{L_k}$ then, since $x_m$ has no left divisor in $G_{L_k}$, the
product $x_mx'_1$ can be evaluated by writing $x_m$ and $x'_1$ as products
of syllables and then performing commuting and cancellation moves only
so we can rewrite $x_mx'_1$ as $z \in G_{L_k}$
without performing any mergers, to arrive at the product
\[(x_1y_1 \cdots x_{m-1}y_{m-1})(z y'_1 \cdots x'_{p}y'_{p}),\]
which satisfies the conditions of Case 3 for $m-1$.
The lemma now follows by induction applied to that product.

{\bf Case 3.} $1 \ne y_m$ and $x'_1 \in G_{L_k}$.

If $y_m y'_1 \ne 1$, then we perform the $G_k$-merger $y_m y'_1$,
and the $H_k$-merger $x_mx'_1$ (which cannot be in $G_{L_k}$, since
$x_m \not\in G_{L_k}, x'_1 \in G_{L_k}$).
Then we can apply the result of Case 1 to the product
$(x_1y_1 \cdots x_{m-1}y_{m-1})(x_mx'_1 y_my'_1 \cdots x'_{p}y'_{p})$,
and the proof is complete.

If $y_m y'_1 = 1$ then the result is clear if $p=1$ and otherwise, since
$x'_2$ has no left divisor in $G_L$, the merger $x'_1x'_2$ is non-reducing,
so the result follows by applying Case 2 to the product
$(x_1y_1 \cdots x_{m-1}y_{m-1} x_m)(x'_1x'_2 y'_2\cdots x'_{p}y'_{p})$.
\end{proof}

This completes the proof of Condition (d), and hence we see that
$\phi_k$ is a special
$(F,\epsilon'')$-quasi-action, with $\epsilon'' = (nf(n-1) + 1)\epsilon$.

It remains to verify Condition ($2'$).
We have shown already that, for
each $i \in I$, the action of $\phi_k(G_i)$ on $C$ preserves each of the
$\sim^k_i$-equivalence classes, from which it follows immediately that
$g \in G_J$ with $J \subseteq I$ implies $c^{\phi_k(g)} \sim^k_J c$.

Now suppose that $J \subseteq I_k$,  
$c=(d,a,v) \in C_k$, $g \in F$, and that $c^{\phi_k(g)} \sim^k_J c$.
Since $k \not \in J$, it is immediate from the definition of $\sim^k_j$ for
$j \in J$ that
\[ (d,a,v) \sim^k_J (d',a',v') \iff d \simeq_J d',\ a=a',\ v=v'. \]
So now, arguing as in our earlier proof of Condition (c) of Definition~\ref{defn:qaction} for $\phi_k$ that, for $1 \ne g \in F$,
$(d^{\theta_k(g)},a,v) \ne (d,a,v)$ we find that, for $g \in F$, $(d^{\theta_k(g)},a,v) \sim_J (d,a,v)$
if and only if $g \in H_k$ and $d^{\theta_k(g)} \simeq_J d$. By
our inductive hypothesis, this is true if and only if $g \in G_J$.
Hence Condition ($2'$) holds.
\end{proofof}

\section{Graphs of groups}
\label{sec:graphgroups}

In this section we prove Theorem~\ref{thm:graphgroups}.

We recall the definition of a graph of groups, which arises from the work
of Bass and Serre \cite{SerreFrench,Serre}
\begin{definition}
A graph of groups $\mathcal{G}$ consists of 
\begin{mylist}
\item[(1)] a connected graph $\Gamma$ (in which loops are allowed, but no multiple edges), with vertex set $V$, edge set $E$,
\item[(2)] a collection of {\em vertex groups} $G_v: v \in V$ and
{\em edge groups} $G_e: e \in E$,
\item[(3)] for each edge $e=\{v_1,v_2\}$ of $\Gamma$, monomorphisms $\theta^1_e:G_e \rightarrow G_{v_1}$ and $\theta^2_e:G_e \rightarrow G_{v_2}$.
\end{mylist}
\end{definition}

The {\em fundamental group} $\pi_1(\mathcal{G})$ of a graph of groups $\mathcal{G}$ can defined in various different (but equivalent) ways. The following definition
is essentially \cite[Definition I.3.4]{DicksDunwoody}.
The definition is given in terms of a selected spanning tree $T$ of $\Gamma$,
but (up to isomorphism) the resulting group is independent
of this choice.
The associated fundamental group $\pi_1(\mathcal{G},T)$ is then the group generated by
the groups $G_v: v \in V$ together with generators $t_e$, one for each (oriented)
edge in $E$), given the following relations.
\begin{mylist}
\item[(1)] all the relations of the groups $G_v$,
\item[(2)] $t_e^{-1}\theta^1_e(g)t_e = \theta^2_e(g)$, for each $e \in E,g\in G_e$,
\item[(3)] $t_e = 1$ for each edge $e$ of $T$.
\end{mylist}
From this description it is not hard to see that $\pi_1(\mathcal{G},T)$
is isomorphic to a multiple HNN extension, with stable letters $t_e$ for $e \not \in E(T)$, of the amalgamated product of the groups $G_v$ in which
$\theta^1_e(g)$ and $\theta^2_e(g)$ are identified for all $e \in E(T), g \in G_e$.
Independent results of Elek and Szabo (\cite[Theorem 1]{ElekSzabo2}) and Paunescu
(\cite[Corollary 2.3]{Paunescu}) already prove that the 
amalgamated product of two sofic groups over an amenable subgroup is sofic.
Hence Theorem~\ref{thm:graphgroups} follows immediately
by combining that result with 

\begin{proposition}\label{HNN}
An HNN extension of a sofic group $H$ over an amenable subgroup $K$ is sofic.
\end{proposition}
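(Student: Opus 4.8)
The plan is to build, for a given finite $F\subseteq G$ and $\epsilon\in(0,1)$, a special $(F,\epsilon)$-quasi-action of the HNN extension $G=\langle H,t\mid t^{-1}\theta^1(k)t=\theta^2(k)\ (k\in K)\rangle$, where I write $K_1=\theta^1(K)$, $K_2=\theta^2(K)$ and $\alpha=\theta^2\circ(\theta^1)^{-1}\colon K_1\to K_2$. By Britton's Lemma each $g\in G$ has a reduced form $h_0t^{e_1}h_1\cdots t^{e_r}h_r$ (with $e_i=\pm1$ and no pinch), and I fix one such form for every $g$, chosen compatibly with inversion, so that the fixed form of $g^{-1}$ is obtained from that of $g$ by reversing the word and inverting each letter (this is again reduced, since the pinch condition is inversion-invariant). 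I may assume every element of $F$ has $t$-length at most some $N$ and that all its $H$-syllables lie in a finite set $F_H\subseteq H$. The two inputs are soficity of $H$, which supplies approximate actions on finite sets, and amenability of $K$, which supplies F\o lner sets; the whole difficulty lies in using the latter to make the single defining relation hold approximately.

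First I would fix small parameters $\delta,\epsilon_0>0$ (to be chosen at the end in terms of $\epsilon$ and $N$) and a finite set $S\subseteq K$ containing every element of $K$, and every bounded product of such, that can arise when a concatenation of two reduced forms from $F$ is reduced. Amenability gives a F\o lner set $\Phi\subseteq K$ with $|s\Phi\,\triangle\,\Phi|<\delta|\Phi|$ for all $s\in S$. Soficity of $H$ gives a special $(F_H^+,\epsilon_0)$-quasi-action $\psi\colon H\to\Sym(A)$, where $F_H^+$ enlarges $F_H$ by the $\theta^1$- and $\theta^2$-images of a large enough finite subset of $K$, chosen so that both composites $k\mapsto\psi(\theta^1(k))$ and $k\mapsto\psi(\theta^2(k))$ are good approximate actions of $K$ on $A$ for the data $(S,\epsilon_0)$. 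I then set $\phi(h):=\psi(h)$ for $h\in H$.

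The core step, and the hard part, is to define $\phi(t)$. I want a permutation $\tau\in\Sym(A)$ for which $\tau^{-1}\psi(\theta^1(k))\tau$ agrees with $\psi(\theta^2(k))$ outside a set of size $O((\delta+\epsilon_0)|A|)$ for every $k\in S$; this makes $t^{-1}\theta^1(k)t=\theta^2(k)$ hold approximately. Such a $\tau$ exists precisely because $K$ is amenable: the two restricted maps are approximate actions of one amenable group on sets of equal size, and any two such are approximately conjugate. Concretely one discards a proportion $O(\delta)$ of $A$ and tiles the remainder, for each of the two actions, by pieces on which the action looks like the regular $K$-action on a translate of $\Phi$; matching the $\theta^1$-tiling to the $\theta^2$-tiling tile by tile and letting $\tau$ carry one regular pattern to the other produces the required intertwiner, with errors only from F\o lner boundaries. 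I then extend $\phi$ to all of $G$ through the chosen reduced forms, $\phi(h_0t^{e_1}h_1\cdots t^{e_r}h_r):=\psi(h_0)\tau^{e_1}\psi(h_1)\cdots\tau^{e_r}\psi(h_r)$. This construction of $\tau$ is exactly where amenability is indispensable and the error bookkeeping is delicate, and it is the content imported in the literature from \cite[Corollary 3.6]{CollinsDykema}.

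Finally I would verify Definition~\ref{defn:qaction}. Condition (a) is immediate, and (b) holds because the fixed reduced form of $g^{-1}$ is the reverse-and-invert of that of $g$ while $\psi$ itself satisfies (b). For (d) I concatenate the reduced forms of $g_1,g_2\in F$ and reduce by Britton pinches $t^{-1}k_1t\mapsto\alpha(k_1)$, $tk_2t^{-1}\mapsto\alpha^{-1}(k_2)$ and by $H$-mergers; the number of moves is bounded by a function of $N$, each pinch alters the corresponding composite permutation on a proportion $O(\delta+\epsilon_0)$ of $A$ by the defining property of $\tau$, and each $H$-merger on a proportion $O(\epsilon_0)$ since $\psi$ is an approximate action, so choosing $\delta,\epsilon_0$ small relative to $\epsilon$ and $N$ keeps the total discrepancy below $\epsilon$. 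For (c), a reduced word of positive $t$-length fixes only a small proportion of points, since a genuine $t$-syllable moves generic points between distinct tiles, whereas a nontrivial element of $H$ fixes no point by condition (c) for $\psi$; upgrading this small-fixed-point version to a genuine special quasi-action with no fixed points is exactly the equivalence recorded after Definition~\ref{defn:sofic} via \cite[Lemma 2.1]{ElekSzabo}. The main obstacle throughout is the third paragraph: producing $\tau$ and controlling its F\o lner error, which is the single place where amenability of $K$ enters.
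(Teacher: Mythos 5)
Your proposal takes a genuinely different route from the paper (a direct construction of a quasi-action for the HNN extension), but it has a fatal gap at its central step: the verification of condition (c) of Definition~\ref{defn:qaction} for words containing $t$-letters. The intertwining property is the \emph{only} constraint you place on $\tau$, and it does not prevent $\tau$ from satisfying accidental extra relations with $\psi(H)$. For instance, nothing in your construction rules out an intertwiner $\tau$ that commutes with $\psi(h)$ for some $h\in F_H\setminus(K_1\cup K_2)$; then the word $t^{-1}hth^{-1}$, which is reduced (pinch-free) and hence nontrivial in $G$ by Britton's Lemma, is sent to $\tau^{-1}\psi(h)\tau\psi(h)^{-1}\approx 1$, i.e.\ it fixes almost every point of $A$. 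This cannot be repaired by the upgrading device you invoke from \cite[Lemma 2.1]{ElekSzabo}, which only converts ``few fixed points'' into ``no fixed points''; here the offending word acts approximately \emph{trivially}. Your one-line justification --- that a genuine $t$-syllable moves generic points between distinct tiles --- is precisely what is not guaranteed once a word contains several $t^{\pm 1}$-letters and re-enters tiles. Making the construction work requires choosing $\tau$ generically among all intertwiners (or tensoring/perturbing so that the pair $(\psi(H),\tau)$ is in ``general position''), and that is the real content of the amalgamated-product theorems of \cite{ElekSzabo2,Paunescu}, not a routine check. A second, related problem: the existence of $\tau$ is itself the main theorem of \cite{ElekSzabo2} (approximate conjugacy of sofic representations of amenable groups); citing \cite[Corollary 3.6]{CollinsDykema} for it is circular, since that corollary \emph{is} the HNN statement you are proving, and your tiling sketch with translates of a single F\o lner set $\Phi$ only works for monotileable amenable groups (exactly the extra hypothesis Collins and Dykema needed); general amenable $K$ requires Ornstein--Weiss quasi-tilings by several tile shapes plus a counting argument matching the tile statistics of the two actions.

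For contrast, the paper sidesteps all approximate-permutation bookkeeping by a purely group-theoretic reduction: $G$ is an extension by $\Z$ of $S=\langle t^{-i}Ht^{i}: i\in\Z\rangle$, and $S$ is an iterated amalgamated product of the conjugates $t^{-i}Ht^{i}$ over copies of $K$. Every finitely generated subgroup of $S$ lies in a finite such amalgam, which is sofic by the already-known amalgamated-product theorem of \cite{ElekSzabo2,Paunescu}, and an extension of a sofic group by an amenable group is sofic by \cite[Theorem 1(3)]{ElekSzabo}; hence $G$ is sofic. If you want to keep your direct approach, you would in effect have to reprove those two black-box theorems, which is exactly the work your sketch elides.
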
  
We deduce Proposition~\ref{HNN} as a corollary of the amalgamated
product result.
We note that the argument to do this was already provided by Collins and Dykema
in order to deduce their result \cite[Corollary 3.6]{CollinsDykema}
as a corollary of their result \cite[Theorem 3.4]{CollinsDykema},
that is to deduce the same result as above in the situation where the
associated subgroups (in both amalgamated products and HNN extensions)
are monotileably amenable.  
The argument of \cite{CollinsDykema} goes through without any modification, when
monotileability of the associated subgroup is dropped, 
to deduce the Proposition from the results of \cite{ElekSzabo2,Paunescu}.
But we include the argument here for completeness.

\begin{proof}
Let $G$ be an HNN extension of $H$ over $K$, as in the proposition,
and let $L$ be the subgroup $t^{-1}Kt$.
Define $H_i=t^{-i} H t^i$, $K_i=t^{-i} K t^i$,
$L_i=t^{-i} L t^i$ for each $i \in \Z$, and define
$S:= \langle H_i \mid i \in \Z \rangle$.
Then $G$ can be expressed as an extension of $S$ by $\Z$.
Since $\Z$ is amenable, and by \cite[Theorem 1(3)]{ElekSzabo} an extension
of a sofic group by an amenable group is sofic, in order to prove $G$
sofic it is enough to prove $S$ sofic.

Now $S$ can be expressed as an iterated amalgamated product of the (countably many)  $H_i$s, 
with amalgamation over subgroups isomorphic to $K$. More precisely, $S$ is the
fundamental group of the graph of groups associated with the graph of the
integers, where $H_i$ is the vertex group of the vertex $i$, each edge group is
isomorphic to $K$, and the copy of $K$ associated with edge $\{i,i+1\}$ maps to
the subgroup $L_i$ of $H_i$, and the subgroup 
$K_{i+1}$ of $H_{i+1}$, as in Figure 1.  

\begin{figure}[!h]
\centerline{
\xymatrix{
\cdots H_{i-1}\ar@{-}[rr]^{\ L_{i-1}\hookleftarrow \ \ \hookrightarrow K_i} && H_i \ar@{-}[rr]^{L_{i}\hookleftarrow \ \ \hookrightarrow K_{i+1}} && H_{i+1} \ar@{-}[rr]^{L_{i+1}\hookleftarrow \ \hookrightarrow K_{i+2}} && H_{i+2} \cdots \\
}
}
\caption{The graph of groups $\mathcal{H}$}
\end{figure}

To prove $S$ sofic we now need to verify soficity for each of its finitely
generated subgroups. So let $M$ be such a subgroup. Then for some $k,l$,
all the generators of $M$ are within vertex subgroups $H_i$
for $k \leq i \leq l$,   
that is, $M$ is a subgroup of the amalgamated product
\begin{equation*}
H_j \ast_{\substack{L_j=K_{j+1}}} H_{j+1} \ast_{L_{j+1}=K_{j+2}} \ast \cdots \ast_{L_{l-1}=K_{l}} H_l.
\end{equation*}
Since this is sofic, by \cite{ElekSzabo2,Paunescu}, so is $M$. 
\end{proof}

\section*{Acknowledgments}
All three authors were partially supported by the Marie Curie Reintegration
Grant 230889. The first named author was also supported by the Swiss National Science Foundation grant Ambizione PZ00P-136897/1.

\textsc{L. Ciobanu,
Mathematics Department,
University of Neuch\^atel,
Rue Emile - Argand 11,
CH-2000 Neuch\^atel, Switzerland
}

\emph{E-mail address}{:\;\;}\texttt{laura.ciobanu@unine.ch}
\medskip

\bigskip

\textsc{D. F. Holt,
Mathematics Institute,
University of Warwick,
Coventry CV4 7AL,
UK
}

\emph{E-mail address}{:\;\;}\texttt{D.F.Holt@warwick.ac.uk}
\medskip
\bigskip

\textsc{S. Rees,
School of Mathematics and Statistics,
University of Newcastle,
Newcastle NE1 7RU,
UK
}

\emph{E-mail address}{:\;\;}\texttt{Sarah.Rees@newcastle.ac.uk}
\medskip


\begin{thebibliography}{1}

\bibitem{CollinsDykema} B. Collins and K. J. Dykema, Free products of sofic
groups with amalgamation over monotileably amenable groups,
M\"unster J. Math. 4 (2011), 101--118.
\bibitem{DicksDunwoody} W. Dicks and M.L.Dunwoody, Groups acting on graphs, Cambridge studies in advanced mathematics 17, C.U.P. 1989.
\bibitem{ElekSzabo} G. Elek and E. Szabo, On sofic groups, J. Group Theory 9
(2006), 161 --171.
\bibitem{ElekSzabo2} G. Elek and E. Szabo, Sofic representations of amenable
groups, Proc. Amer. Math. Soc. 139 (2011), 4285--4291.
\bibitem{Green} E. Green, Graph products of groups, Ph.D. thesis, University of Leeds, 1990.
\bibitem{Gromov} M. Gromov, Endomorphisms of symbolic algebraic varieties,
J. Eur. Math. Soc (JEMS) 1 (1999) 109--197.
\bibitem{Paunescu} L. Paunescu, On sofic actions and equivalence relations,
J. Funct. Anal. 261 (2011) 2461–-2485.
\bibitem{Pestov} V.G. Pestov, Hyperlinear and sofic groups: a brief guide, Bull. Symbolic Logic 14 (2008), 449--480.
\bibitem{PestovKwiatowska} V.G. Pestov and A. Kwiatkowska, An introduction to
hyperlinear and sofic groups (preprint) http://arxiv.org/pdf/0911.4266.
\bibitem{Serre} J.-P. Serre, Trees, Springer Monogr. Math., Springer, Berlin 2003.
\bibitem{SerreFrench} J.-P.Serre, Arbres, amalgames, $SL_2$, Ast\'erisque 46 (1977).
\bibitem{Weiss} B. Weiss, Sofic groups and dynamical systems, Ergodic Theory and
Harmonic Analysis 2000 (62) 350--359.
\end{thebibliography}
\end{document}